\theoremstyle{plain}
\newtheorem{thm}{Theorem}
\newtheorem{lem}[thm]{Lemma}
\newtheorem{cor}[thm]{Corollary}
\theoremstyle{definition}
\newtheorem{rem}{Remark}
\newtheorem{quest}{Question}
\newcommand{\R}{\mathbb{R}}
\newcommand{\N}{\ensuremath{\mathbb{N}}}
\newcommand{\T}{\ensuremath{{\mathcal T}}}
\renewcommand{\epsilon}{\varepsilon}
\newcommand{\grey}[1]{\textcolor[rgb]{0.5,0.5,0.5}{#1}}
\begin{document}


\title{Incongruent equipartitions of the plane}
\author{Dirk Frettl{\"o}h}
\address{Faculty of Technology, Bielefeld University, 33501 Bielefeld, Germany}
\author{Christian Richter}
\address{Institute of Mathematics, Friedrich Schiller University, 07737 Jena, Germany}

\begin{abstract}
R.\ Nandakumar asked whether there is a tiling of the plane by pairwise 
incongruent triangles of equal area and equal perimeter. Recently 
a negative answer was given by Kupavskii, Pach and Tardos. Still one
may ask for weaker versions of the problem, or for the analogue of
this problem for quadrangles, pentagons, or hexagons. Several answers
were given by the first author in a previous paper. Here we solve 
three further cases. In particular, our main result shows that
there are vertex-to-vertex tilings by pairwise incongruent triangles 
of unit area and bounded perimeter.

\end{abstract}


\maketitle


\section{Introduction}

Discrete geometry is one of the fields containing harmless and natural 
sounding questions where the answer may require a lot of effort.
R.\ Nandakumar posed several such intriguing questions about discrete
geometry in his blog \cite{nblog1}. Some of them have triggered a lot 
of research recently.
For example he asked: ``Is it possible to tile the plane with pairwise noncongruent 
equilateral triangles whose side lengths are bounded from below by a positive 
constant?" The answer is ``No" as was shown recently in \cite{pt,rw}. The 
corresponding answer for squares is more classical, see for instance \cite{gs}.
A very fruitful question of Nandakumar is ``can any convex set in the plane
be dissected into $n$ convex pieces with the same area and the same
perimeter?"  The problem seems harmless, but its solution required pretty
sophisticated tools from algebraic topology \cite{bbs, bz, kha, nr}. For a survey
see \cite{z2}. This paper is dedicated to another of his problems \cite{nblog1}: 

\begin{quest} \label{q:frage0}
``Can the plane be tiled by triangles of same 
area and perimeter such that no two triangles are congruent to each other?'' 
\end{quest}
Here congruence is meant with respect to Euclidean isometries; that is, 
isometries of the plane including reflections.
Question \ref{q:frage0} was answered in \cite{kpt1} by showing that no such tiling 
exists. Weakening the problem by dropping any requirement on the perimeter
makes the problem easy: it is not hard to find tilings of the plane by triangles of unit 
area with unbounded perimeter, see \cite{nblog1}. Hence Nandakumar also asked 
the following weaker version of Question \ref{q:frage0}: 
\begin{quest} \label{q:frage}
``Can the plane be tiled by triangles of same area, and with
uniformly bounded perimeter, such that no two triangles are congruent to each other?'' 
\end{quest}
As above, congruence is meant with respect to Euclidean isometries.
Question \ref{q:frage} was answered in \cite{f} (partly), and in \cite{kpt2}. 

\begin{thm}[\cite{f,kpt2}] \label{thm:fkpt} 
There are tilings of the plane by pairwise incongruent triangles 
of unit area and bounded perimeter. 
\end{thm}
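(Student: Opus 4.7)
\emph{Plan.} My approach would be a strip construction. Partition the plane into horizontal strips $S_n$, $n\in\Z$, with heights $h_n$ drawn from a bounded interval $[h_{\min},h_{\max}]$ bounded away from zero, and triangulate each $S_n$ with unit-area triangles using an alternating up-down pattern, vertices placed on the top and bottom boundaries of the strip. The naive attempt with strictly horizontal boundaries fails: the unit-area condition rigidly forces the vertex spacings on both boundaries to equal $2/h_n$, so every triangle in $S_n$ is a translate (possibly reflected) of a single triangle, producing only one congruence class per strip. To break this rigidity I would take each strip boundary to be a slightly zigzagged piecewise-linear path, so that the local strip height varies along the strip and the unit-area equations leave room for the individual triangle shapes to vary.

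Concretely, starting from a zigzag lower boundary $\Gamma_0$ consisting of bounded, generic perturbations of a uniform horizontal grid, I would build the upper boundaries $\Gamma_n$ recursively: each new vertex on $\Gamma_n$ is determined (up to discrete choices) by the two unit-area equations for the two triangles adjacent to it in $S_n$. Choosing the initial zigzag data generically and the heights $h_n$ pairwise distinct ensures (a) no two triangles in the same strip are congruent, since the set of parameters giving a congruence is a lower-dimensional subvariety of the parameter space that a generic choice avoids, and (b) triangles in different strips are non-congruent, since their side lengths depend explicitly on $h_n$. The perturbations being bounded then yields uniformly bounded perimeters.

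The main obstacle I foresee is \emph{stability}: the recursive propagation $\Gamma_{n-1}\mapsto\Gamma_n$ via the unit-area equations may amplify the zigzag amplitudes, causing the perimeters to grow unboundedly as $|n|\to\infty$. Controlling this would require showing the recursion is non-expanding in a suitable norm, or replacing the strictly inductive construction with a simultaneous global prescription of all $\Gamma_n$ that is consistent and uniformly bounded from the outset. This, I expect, is where the bulk of the technical effort in \cite{f, kpt2} is spent; the combinatorial ``alternating up-down'' backbone is easy, but keeping the perturbations bounded while still generic enough to avoid all countably many possible congruences is the delicate point.
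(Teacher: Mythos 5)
Your plan is recognizably the same family of construction that this paper uses for the stronger, vertex-to-vertex version (Theorem \ref{thm:vtv-triangles}): tile a strip by unit-area triangles with vertices on two boundary curves, perturb, propagate the perturbation through the unit-area equations, and use a genericity/countability argument to kill congruences. But as a proof it has a genuine gap, and you have named it yourself: you do not show that the recursion $\Gamma_{n-1}\mapsto\Gamma_n$ (or, within a single strip, the propagation of the zigzag along the strip) keeps the perturbations uniformly bounded. This is not a routine verification one can wave at --- it is the analytic core of the argument. The paper resolves it by making a very rigid choice: only one vertex is moved (to height $y_0$), mirror symmetry in the vertical axis is imposed, explicit recursions \eqref{eq:dxi}--\eqref{eq:dbi} for the coordinate deviations are derived, and Lemma \ref{lem:xi-yi-ai-bi} proves by induction that for $y_0\in\bigl(\frac{1}{\sqrt{3}},\frac{3}{\sqrt{19}}\bigr)$ the deviations decay geometrically ($y_i<\frac12 y_{i-1}$), which is what makes Lemma \ref{lem:perimeter} (bounded perimeter) work. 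Note that the admissible window for $y_0$ is narrow and that for other initial data the behaviour changes qualitatively (Remark \ref{rem:sqrt3}); this is evidence that ``bounded, generic perturbations of a uniform grid'' will not automatically propagate in a bounded way, so the step you defer cannot be taken for granted.

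A second, smaller gap: your claim that ``the set of parameters giving a congruence is a lower-dimensional subvariety that a generic choice avoids'' requires, for each of the countably many pairs of tiles, a proof that the congruence equations are not identically satisfied on the parameter space. The paper does this concretely by computing the degenerate tiling at $y_0=\frac{1}{\sqrt{3}}$ and checking which congruences hold there (Lemmas \ref{lem:1/sqrt(3)}--\ref{lem:Ffinite}); moreover, a one-parameter perturbation of a mirror-symmetric strip cannot separate a tile from its mirror image, which is why the paper needs the additional shearing step (Lemma \ref{lem:kong-endlich}, Corollary \ref{cor:Tn}) to remove reflective congruences. Your multi-parameter zigzag might avoid the symmetry issue, but you would still have to exhibit, for every pair of tiles, at least one parameter value at which they are incongruent. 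As it stands your text is a correct identification of the strategy and of its two difficulties, not a proof.
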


Once this is settled one may ask the same
question for convex $n$-gons. Moreover, the tilings in \cite{f,kpt2} 
are not vertex-to-vertex (vtv), hence they are not polytopal cell 
decompositions \cite{z}. Hence one may ask whether there are vtv 
tilings fulfilling the properties of Theorem \ref{thm:fkpt}. 
Table \ref{tab:alles}
provides an overview of several variants of the problems, together
with the known answers. 
\begin{table}[b]
\begin{tabular}{l|cc}
\hline
Triangles & vtv &  not vtv \\
\hline
bounded perimeter & Yes & \grey{Yes$^{1,2}$}\\
tiling a tile & ? & ? \\
equal perimeter & \grey{No$^1$} & \grey{No$^3$}\\
\hline
\hline
Pentagons & vtv &  not vtv \\
\hline
bounded perimeter & Yes &  \grey{Yes$^1$}\\
tiling a tile & Yes &  \grey{Yes$^1$}\\
equal perimeter & ? & ?\\
\hline
\end{tabular}
\quad
\begin{tabular}{l|cc}
\hline
Quadrangles & vtv &  not vtv \\
\hline
bounded perimeter &  \grey{Yes$^1$} &  \grey{Yes$^1$}\\
tiling a tile &  \grey{Yes$^1$} &  \grey{Yes$^1$}\\
equal perimeter & ? & ?\\
\hline
\hline
Hexagons & vtv & not vtv\\
\hline
bounded perimeter &  \grey{Yes$^1$} & Yes \\
tiling a tile &  \grey{Yes$^1$} & ? \\
equal perimeter & ? & ?\\
\hline
\end{tabular}
\medskip
\caption{Several instances of the problem ``tiling the plane with 
pairwise incongruent convex $n$-gons of unit area'' plus some further
conditions. Grey entries marked $^1$ were solved in \cite{f}. The grey 
entry marked $^{1,2}$ was solved in \cite{f} (partly) and in \cite{kpt2}. 
The grey entry marked $^3$ was solved in \cite{kpt1}. Black entries ``Yes'' are 
solved in this paper. \label{tab:alles}}
\end{table}

This paper is devoted to solve further instances of the problem
(shown in black in Table \ref{tab:alles}).
In the remainder of this section we introduce some notations and
basic ideas. Along the way we show that there are vertex-to-vertex 
tilings of the plane by pairwise incongruent convex pentagons of 
unit area and bounded perimeter (Theorem \ref{thm:pent}). 
The main result of this paper is Theorem \ref{thm:vtv-triangles}.
It says that there are vertex-to-vertex tilings of the plane by
pairwise incongruent triangles of unit area and bounded perimeter.
Section \ref{sec:tritil} is devoted to the proof of this result.
In the last section we construct a 
tiling of the plane by pairwise incongruent convex hexagons of 
unit area and bounded perimeter that are \emph{not} vertex-to-vertex.

Note that usually the requirement of being vertex-to-vertex is
more restrictive than the requirement of being not vertex-to-vertex,
but for convex hexagons it is the other way around.
For instance, it is easy to produce tilings by unit squares that
are not vertex-to-vertex: consider a regular tiling by unit
squares and shift one column of squares by $1/2$. In fact,
the only vertex-to-vertex tiling of $\R^2$ by unit squares
is the regular one (up to conguence); and there are uncountably 
many distinct tilings of $\R^2$ by unit squares with infinitely many
non-vtv situations.

For normal tilings (that is, tilings where the tiles do not get arbitrarily 
small, and the perimeter of the tiles does not get arbitrarily large \cite{gs})
of the plane by convex hexagons it is known that  
they cannot contain infinitely many non-vtv situations.
Try it, or see \cite{a,fgl}.

\subsection{Notation}
A {\em tiling} of a set $A \subseteq \R^2$ is a collection $\{T_1, T_2, \ldots \}$ 
of compact sets $T_i \subseteq \R^2$ (the {\em tiles}) that is a packing 
(that is, the interiors of distinct tiles are disjoint) as well as a 
covering of $A$ (that is, the union of the tiles equals $A$).
In general, shapes of tiles may be pretty complicated, but for
the purpose of this paper tiles are always convex polygons. 
A tiling is called {\em vertex-to-vertex (vtv)}, if the intersection of any two 
tiles is either an entire edge of both tiles, or a vertex of both tiles, or 
empty. 
An \emph{equipartition} of the plane is a tiling of $\R^2$ such that
each tile has unit area. The set of positive integers is denoted by $\N$.
We refer to \cite{gs} as a standard reference work on tilings.


\subsection{Tiling a tile} \label{sec:tiling-a-tile}

One possible approach to find a solution for an entry in Table \ref{tab:alles} 
is the following. If one can partition a set $P \subseteq \R^2$ into 
convex $n$-gons of unit area such that 
\begin{itemize}
\item[(i)] $P$ tiles the plane
(that is, $\mathbb{R}^2$ can be tiled by congruent copies of $P$), and
\item[(ii)] all $n$-gons in $P$ can 
be distorted continuously, in a way such that any two $n$-gons in 
$P$ are incongruent (but still have unit area), 
\end{itemize}then this yields a 
solution to the problem under consideration. Let us illustrate this 
concept for pentagons. Figure~\ref{fig:5gons} shows a large
pentagon $P$ that tiles the plane.
This large pentagon is divided into six smaller pentagons
of unit area (left). Four of the five inner vertices of this subdivision 
can be wiggled continuously without destroying the unit area
property:
\begin{figure}
\[ \includegraphics[width=.55\textwidth]{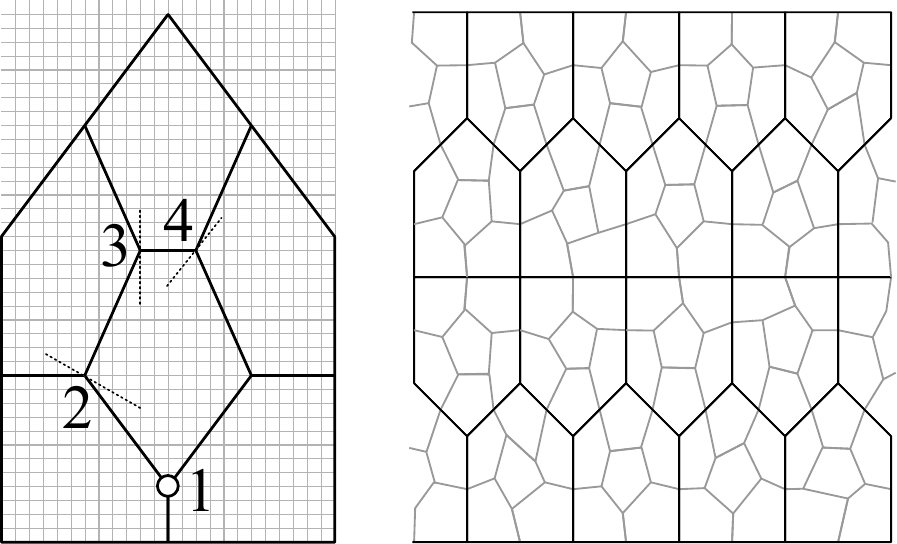} \]
\caption{Partitioning a convex pentagon into six distinct convex
  pentagons of equal area. A white dot (resp., a dashed line) indicates two
  degrees (resp.,  one degree) of freedom for moving the corresponding
  vertex. The numbers give 
the order of choosing free vertices. Unmarked vertices are not free. 
The resulting tiling is vertex-to-vertex. \label{fig:5gons}}
\end{figure}
As indicated in the image,
there are two continuous degrees of freedom for the choice
of the first vertex, marked by 1 in the image.
That is, vertex 1 can be moved within some ball of small radius. Once this
vertex is fixed, there is just one line segment representing the 
possible positions of vertex 2 such that the area 
of the lower left pentagon remains one. That is, vertex 2 
will still have one degree of freedom (indicated by a dashed line). 
In a similar manner now vertex 3 needs to be adjusted 
such that the area of the pentagon left of it remains one. This 
vertex still has one degree of freedom: it can
be shifted in direction of the dashed line without affecting the 
area of the pentagon left of it. The same is true for 
vertex 4: it will be affected by the choice of vertex 3 in order to keep
the area of the upper pentagon to one, but still has one
degree of freedom, indicated by a dashed line.
When all these four vertices are chosen, the last interior
vertex is fixed by the condition that both the two right-hand 
interior pentagons need to have area one: it is the intersection point of 
two segments representing admissible positions such that the areas of the 
upper right and the lower right interior pentagons are one, respectively. 
The central pentagon will automatically have unit area, since the areas of 
the six small pentagons add up to $6$.

Hence we may partition the large pentagon (of area $6$, say) in uncountably
many ways into pairwise incongruent pentagons
of unit area. This yields the desired equipartition of the plane:
In the first large pentagon we will distort the pentagons in a way
such that all of them are incongruent. This can be achieved
easily. In the second large pentagon we use the uncountably 
many possible choices in order to achieve that all small 
pentagons in the second large pentagon are 
\begin{itemize}
\item[(i)] incongruent to all pentagons in the first large pentagon and 
\item[(ii)]  pairwise incongruent to each other. 
\end{itemize}
And so on. The large pentagons will 
be arranged as on the right in Figure \ref{fig:5gons}.  
At each stage there are only finitely
many shapes of small pentagons to be avoided. Because we may
choose from uncountably many pentagons, this procedure
yields the desired equipartition of the plane into pentagons.
Clearly these equipartitions are vertex-to-vertex. This way tiling a 
tile gives the following result.
\begin{thm} \label{thm:pent}
There is a vertex-to-vertex tiling of the plane by pairwise incongruent 
convex pentagons of unit area and uniformly bounded perimeter.
\end{thm}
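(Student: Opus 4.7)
The plan is to carry out in detail the tiling-a-tile sketch that precedes the theorem. The starting data are a fixed convex pentagon $P$ that tiles the plane monohedrally (as illustrated in the right half of Figure~\ref{fig:5gons}) together with a reference partition of $P$ into six convex pentagons of area $1$ as indicated on the left. The vertices of the partition that lie on $\partial P$ are held fixed throughout; only the five interior vertices $v_1,\ldots,v_5$ will be varied. Since those interior vertices never touch $\partial P$, the induced subdivision of the plane obtained from a monohedral tiling by congruent copies of $P$ is automatically vertex-to-vertex for every admissible choice.

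Next I would parameterize the admissible partitions. In a small open neighborhood $U_1$ of its reference position, $v_1$ may be placed arbitrarily. Given $v_1$, the constraint that the lower-left pentagon has area $1$ is an affine equation in $v_2$, confining $v_2$ to a line segment and leaving one degree of freedom. The same reasoning places $v_3$ on a line segment (given $v_1,v_2$) and $v_4$ on a line segment (given $v_1,v_3$). Vertex $v_5$ is determined as the transverse intersection of two further affine level sets coming from the remaining side pentagons; the central pentagon has area $1$ automatically because the six areas sum to $\mathrm{area}(P)=6$. Shrinking the neighborhoods if necessary guarantees that all six pentagons remain strictly convex. The admissible partitions thus form an open set in a five-dimensional parameter space.

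Third, I would enumerate the copies $P_1, P_2, \ldots$ of $P$ in the monohedral tiling and construct partitions of each $P_k$ inductively. At stage $k$, only finitely many pentagonal shapes have appeared in stages $1, \ldots, k-1$. For each such shape $Q$, for each of the six sub-slots of $P_k$, and for each labeling, the parameters that produce a pentagon congruent to $Q$ form a proper real-algebraic subset of the five-dimensional parameter space, and similarly for parameters producing two mutually congruent pentagons inside $P_k$. A finite union of proper subvarieties never exhausts the open parameter region, so a valid incongruent choice is available at every stage. The perimeters of the small pentagons are uniformly bounded because the neighborhoods $U_i$ have fixed diameter and the copies of $P$ are rigid motions of a single template.

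The main obstacle I expect is to verify the transversality claim underlying the second step: the successive affine level sets must cut out honest line segments (rather than degenerate lower-dimensional loci) at the reference configuration, which reduces to the non-vanishing of a concrete $4\times 4$ Jacobian of area functionals for the partition in Figure~\ref{fig:5gons}. This is an elementary linear-algebraic check once coordinates are fixed. Two further bookkeeping items are preservation of strict convexity under small perturbations (free from openness) and the existence of a reference partition with five interior vertices strictly inside $P$; both are visible from the figure.
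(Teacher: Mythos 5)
Your proposal is correct and follows essentially the same route as the paper: the tiling-a-tile construction with a six-pentagon partition of $P$, a $2+1+1+1$ parameter count for the interior vertices with the fifth vertex and the central pentagon determined automatically, and an inductive choice over the copies of $P$ avoiding finitely many forbidden shapes at each stage. Your added remarks on transversality and on congruence loci being proper subsets of the parameter space merely make explicit what the paper leaves to the reader.
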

In the next section we will prove a similar result for triangles,
but the proof requires more effort. 


\section{Vtv equipartitions of the plane into triangles} \label{sec:tritil}

The argument that we can use continuous degrees of freedom
in order to avoid finitely many (or countably many) shapes will
also be used in the proof of the following result. 

\begin{thm} \label{thm:vtv-triangles}
There is a vertex-to-vertex tiling of the plane by pairwise incongruent 
triangles of unit area and uniformly bounded perimeter.
\end{thm}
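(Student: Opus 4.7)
The pentagon-style argument of Theorem~\ref{thm:pent} cannot be copied verbatim for triangles. A straightforward Euler-formula count shows that for a triangular prototile $P$, with its corners held fixed (as forced by rigid copies of $P$ tiling the plane), the space of vtv partitions of $P$ into $n$ unit-area triangles, with non-corner boundary vertices free to slide along the sides of $P$, has dimension exactly $0$; for prototiles with more corners the count is strictly negative. Hence no triangulated prototile yields a continuous family of subdivisions, and the tile-a-tile strategy of Theorem~\ref{thm:pent} cannot produce uncountably many pairwise incongruent small triangles. A more global construction is therefore required.

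My plan is to build the tiling by an inductive outward expansion. Fix any vtv tiling $\T_0$ of $\R^2$ by congruent unit-area triangles (for example, the standard tiling by right isoceles triangles of legs $\sqrt{2}$), and choose a nested exhaustion $R_0\subset R_1\subset R_2\subset\cdots$ of $\R^2$ by finite unions of triangles of $\T_0$ in which the number $L_k$ of vertices on $\partial R_k$ is strictly increasing. I then build tilings $\T_1,\T_2,\ldots$ such that $\T_k$ coincides with $\T_{k-1}$ on $R_{k-1}$ and such that all triangles of $\T_k$ lying in $R_k$ are pairwise incongruent. Passing from $\T_{k-1}$ to $\T_k$ is a re-triangulation of the annular region $R_k\setminus R_{k-1}$ by unit-area triangles, with the inner boundary on $\partial R_{k-1}$ held fixed (by $\T_{k-1}$) and with the outer boundary vertices on $\partial R_k$ together with the interior vertices of the annulus free to move. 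An analogous Euler count for this annular re-triangulation yields that the parameter space has dimension $L_k-L_{k-1}$, which is strictly positive by the choice of exhaustion. The finitely many conditions to be satisfied at step $k$---incongruence of every new triangle with every previously-fixed triangle, and pairwise incongruence among the new triangles---cut out a finite union of proper real-analytic subvarieties of this positive-dimensional space, so a generic choice of parameters avoids all of them.

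To keep perimeters uniformly bounded I would restrict every perturbation at step $k$ to a ball of some fixed small radius $\varepsilon_0$ around the base configuration inherited from $\T_0$, with $\varepsilon_0$ chosen small enough that every triangle side length remains in a fixed interval $[c_1,c_2]$; the perimeters are then uniformly bounded by $3c_2$. The limit tiling $\T_\infty$ is vtv, has all triangles of unit area, has uniformly bounded perimeter, and contains no two congruent triangles. The main obstacle lies in verifying non-degeneracy of the annular re-triangulation: the Euler count provides only the expected dimension of the parameter space, so one must also show that this space is actually a smooth manifold of that dimension near the base configuration (equivalently, that the unit-area constraint map is a submersion there) and that all resulting triangles are non-degenerate for perturbations in a small enough neighborhood. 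Both should follow from an explicit local computation of the area Jacobian at the base configuration together with a continuity argument.
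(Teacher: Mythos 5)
Your opening dimension count is correct and is precisely the reason the paper's pentagon argument (Theorem~\ref{thm:pent}) is not repeated for triangles, and your overall architecture---a construction carrying continuous parameters, then a countability/genericity argument to dodge finitely or countably many congruences, then a smallness condition to bound perimeters---matches the paper's in spirit. But your route (generic re-triangulation of annuli in an exhaustion) is quite different from the paper's, which deforms a periodic tiling of the single strip $\R\times[-1,1]$ through an explicit one-parameter family $\T(y_0)$ governed by a recursion, and then stacks sheared copies $\bigl(\begin{smallmatrix}1&\delta_n\\0&1\end{smallmatrix}\bigr)\overline{\T}$. As written, your sketch has two gaps, and together they contain essentially all of the mathematical content of the theorem.

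First, you treat the Euler count as if it produced an actual parameter space. You flag the submersion issue yourself, but it is not a routine local computation: the base configuration is the maximally degenerate periodic one, the area constraints propagate outward from the fixed inner boundary through the determined vertices, and you need the resulting perturbations to stay within a fixed $\varepsilon_0$ \emph{uniformly} over infinitely many annuli whose inner boundary data has already been perturbed at earlier stages. In the paper this propagation is exactly the content of Lemma~\ref{lem:xi-yi-ai-bi}, whose estimates work only for $y_0$ in the narrow window $\bigl(\tfrac{1}{\sqrt{3}},\tfrac{3}{\sqrt{19}}\bigr)$ and visibly change character outside it (Remark~\ref{rem:sqrt3}); ``a continuity argument'' does not supply such uniform control. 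Second, and more seriously: for genericity you need each locus $\{T\cong T'\}$ to be a \emph{proper} subvariety of the parameter space, i.e.\ for every pair of new triangles a witness parameter at which they are incongruent. An expected-dimension count cannot give this, and the danger is not hypothetical: in the paper's family the mirror pairs $T_i^j(y_0)$ and $T_{-i}^j(y_0)$ are congruent for \emph{every} $y_0$, so their congruence locus is the whole parameter space. The paper must retreat to the weaker relation $\simeq$, exhibit the explicit witness tiling at $y_0=\tfrac{1}{\sqrt{3}}$ (Lemma~\ref{lem:1/sqrt(3)}), use that all coordinates are rational functions of $y_0$ to conclude finiteness of the bad set (Lemma~\ref{lem:F(i,j,i',j')}), and then destroy the surviving mirror congruences by a separate shearing step (Lemma~\ref{lem:kong-endlich}, Corollary~\ref{cor:Tn}). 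Your annuli, built over a symmetric base tiling and a symmetric exhaustion, could a priori contain analogous pairs that remain congruent for all admissible parameters; ruling this out requires exactly the kind of explicit computation your proposal defers, so the argument as it stands is incomplete.
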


\begin{proof}
The general idea of the construction is the following: Consider the
strip $S=\R \times [-1,1]$. Tile $S$ by (pairwise congruent) triangles of 
unit area with edge lengths $\sqrt{2},\sqrt{2}$ and $2$, see Figure~\ref{fig:streifen-ungestoert}.  
\begin{figure}[t]
\[ \includegraphics[width=.8\textwidth]{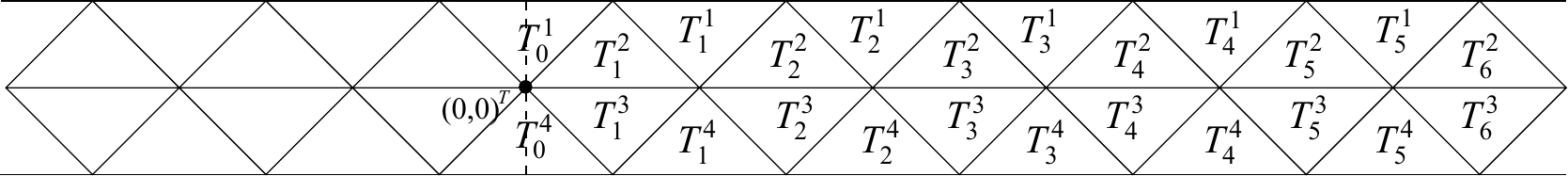} \]
\caption{A tiling of the strip $S$ by pairwise congruent triangles.
\label{fig:streifen-ungestoert}}
\end{figure}
Distort the tiling of $S$ by moving the vertex 
at $(0,0)^T$ to $(0,y_0)^T$ for $0<y_0<1$,
see Figure~\ref{fig:streifen-bez}.

Under the conditions that 
\begin{itemize}
\item[(i)] the topology of the tiling is unchanged, 
\item[(ii)] the new tiling still is a tiling of $S$, 
\item[(iii)] the new tiling is mirror symmetric with respect to the vertical
axis $x=0$, and 
\item[(iv)] the tiles of the new tiling stay triangles of unit area, 
\end{itemize}
the value of $y_0$ determines all other vertices of the tiling.
See Figure \ref{fig:streifen-bez} for the situation where $y_0=\frac{3}{5}$.

In the sequel the strategy of the proof is as follows: first we obtain recursive
formulas for the coordinates of the triangles in the tiling of the strip when
$y_0$ varies, in order to control the amount of distortion of the triangles (Lemma
\ref{lem:xi-yi-ai-bi}). This ensures in particular that the perimeter of the triangles
stays uniformly bounded (Lemma \ref{lem:perimeter}). Then we study the tiling for
$y_0 = \frac{1}{\sqrt{3}}$, having the particular property that it contains many pairwise
congruent triangles $T \cong T'$; and even stronger: it contains many triangles $T,T'$
such that $\pm T$ is a translate of $T'$ (see Figure \ref{fig:1/sqrt(3)},
this property is denoted by $T \simeq T'$).
This is Lemma \ref{lem:1/sqrt(3)}. This can be used in Lemmas \ref{lem:F(i,j,i',j')}
and \ref{lem:Ffinite} to show that there are only countably many $y_0$ such that
the corresponding tiling contains triangles $T,T'$ such that $T \simeq T'$. This in turn
enables us to pick a tiling $\overline{\T}$ of the strip $S$ such that for all $T,T'
\in \overline{\T}$ holds: $T \not\simeq T'$ (Corollary \ref{cor:T}).
Then again a countability argument allows us to find  
sheared copies $\big( \begin{smallmatrix} 1 & \delta_n\\ 0 & 1 \end{smallmatrix}
\big) \overline{\T}$ of $\overline{\T}$ such that no pair of congruent tiles
occur within them, nor in between them (Lemma \ref{lem:kong-endlich}, Corollary
\ref{cor:Tn}). The tilings  $\big( \begin{smallmatrix} 1 & \delta_n\\ 0 & 1 \end{smallmatrix}
\big) \overline{\T}$ (of the strip $S$) can be stacked in order to obtain
the desired vertex-to-vertex tiling of the plane.
\begin{figure}[h]
\[ \includegraphics[width=.8\textwidth]{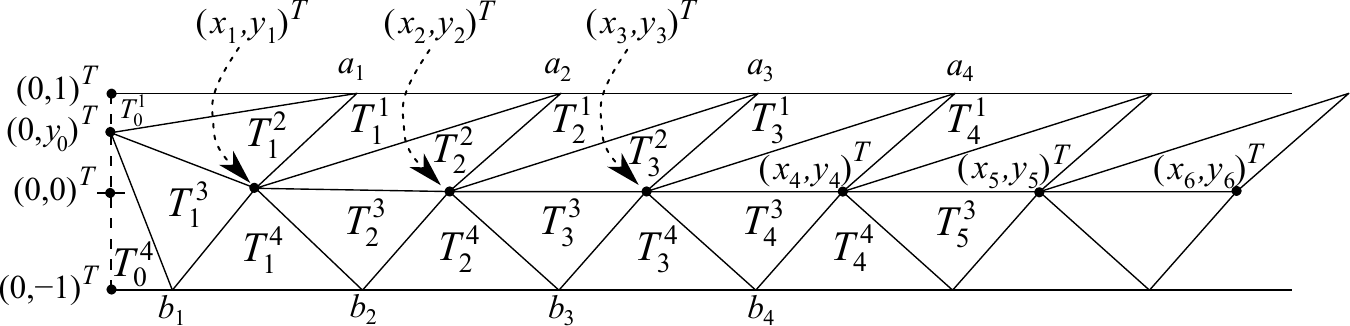} \]
\caption{
The distorted tiling of the half-strip $S^+$. The actual parameter for this
one is $y_0=\frac{3}{5}$. 
\label{fig:streifen-bez}}
\end{figure}

Let us start by considering the tilings of the strip $S$. 
Since we have mirror symmetry with respect to the vertical axis,
we first study the situation within the right half $S^+=[0,\infty) \times [-1,1]$ of $S$.
We need some notation, see Figure \ref{fig:streifen-bez}: 
Let $(x_i, y_i)^T$ denote the coordinates of 
the vertices along the central (distorted) line, separating the upper 
layer of triangles from the lower layer of triangles. Let $a_i$ denote
the $x$-coordinate of the vertices along the upper boundary of
the strip $S$ (the $y$-coordinate is always 1), and let $b_i$ denote
the $x$-coordinate of the vertices along the lower boundary of
the strip $S$ (the $y$-coordinate is always $-1$). Based on the parameter $y_0$, let
\begin{equation}\label{eq:start}
x_0=0, \quad y_0=y_0, \quad a_1 = \frac{1}{1-y_0}, \quad b_1=\frac{1}{1+y_0}
\end{equation}
and, for $i \in \N$,
\begin{align}
x_{i} &= x_{i-1}+2- \frac{2(a_{i}-b_{i})y_{i-1}}{(a_{i}-x_{i-1})(1+y_{i-1})+(b_{i}-x_{i-1})(1-y_{i-1})},\label{eq:xi}\\
y_{i} &= y_{i-1}-\frac{4y_{i-1}}{(a_{i}-x_{i-1})(1+y_{i-1})+(b_{i}-x_{i-1})(1-y_{i-1})},\label{eq:yi}\\
a_{i+1} &= a_i + \frac{2}{1-y_i},\label{eq:ai}\\
b_{i+1} &= b_i + \frac{2}{1+y_i}.\label{eq:bi}
\end{align}
The choice of $a_1$ and $b_1$ ensures that the triangles $T^1_0$ and $T^4_0$ have area $1$. Formulas \eqref{eq:xi}, \eqref{eq:yi}, \eqref{eq:ai} and \eqref{eq:bi} show that $T^1_i$, $T^2_i$, $T^3_i$ and $T^4_i$ are of unit area: a simple computation yields that they imply
\begin{align}
1 &=\frac{1}{2}\det\big((x_{i},y_{i})^T-(x_{i-1},y_{i-1})^T,(a_{i},1)^T-(x_{i-1},y_{i-1})^T\big),\label{eq:T_i'}\\
1 &=\frac{1}{2}\det\big((b_{i},-1)^T-(x_{i-1},y_{i-1})^T,(x_{i},y_{i})^T-(x_{i-1},y_{i-1})^T\big),\label{eq:D_i'}\\
1 &=\frac{1}{2}(a_{i+1}-a_i)(1-y_i),\label{eq:T_i}\\
1 &=\frac{1}{2}(b_{i+1}-b_i)(1+y_i)\label{eq:D_i}
\end{align}
for $i \in \N$. Induction shows that 
\begin{equation}\label{eq:denominator}
4i-3=\frac{1}{2}\big((x_{i-1}+a_i)(1-y_{i-1})+(x_{i-1}+b_i)(1+y_{i-1})\big)
\end{equation}
for $i \in \N$. Indeed, \eqref{eq:start} gives \eqref{eq:denominator} for $i=1$, and adding \eqref{eq:T_i'}, \eqref{eq:D_i'}, \eqref{eq:T_i} and \eqref{eq:D_i} to \eqref{eq:denominator} yields \eqref{eq:denominator} with $i$ replaced by $i+1$. By \eqref{eq:denominator}, the denominator in \eqref{eq:xi} and \eqref{eq:yi} coincides with $2(-4i+3+a_i+b_i)$. Thus
\begin{equation} \label{eq:rek-xi-yi}
x_{i} = x_{i-1}+2- \frac{(a_{i}-b_{i})y_{i-1}}{-4i+3+a_i+b_i},\quad
y_{i} = y_{i-1}-\frac{2y_{i-1}}{-4i+3+a_i+b_i}.
\end{equation}
For the sake of simplicity let $\alpha_i=a_i-(2i-1)$, $\beta_i=b_i-(2i-1)$, $\xi_i=x_i-2i$
denote the deviations of $a_i, b_i, x_i$ in the distorted tiling from 
the corresponding values in the undistorted situation. Then 
\begin{equation}\label{eq:dstart}
\xi_0=0, \quad y_0=y_0, \quad \alpha_1=\frac{y_0}{1-y_0}, \quad \beta_1=\frac{-y_0}{1+y_0}
\end{equation}
and, for $i \in \N$, 
\begin{align}
\xi_i &= \xi_{i-1} - \frac{(\alpha_i-\beta_i) y_{i-1}}{1+\alpha_i+ \beta_i},\label{eq:dxi}\\
y_i &= y_{i-1} - \frac{2y_{i-1}}{1+\alpha_i+ \beta_i}\label{eq:dyi},\\
\alpha_{i+1} &= \alpha_i + \frac{2 y_i}{1-y_i},\label{eq:dai}\\
\beta_{i+1} &= \beta_i - \frac{2 y_i}{1+y_i}.\label{eq:dbi}
\end{align}
Formulas \eqref{eq:dstart}, \eqref{eq:dai} and \eqref{eq:dbi} show that
\[
\alpha_i+\beta_i= 2 \frac{y_0^2}{1-y_0^2} + 4 \left( \frac{y_1^2}{1-y_1^2} + 
\cdots + \frac{y_{i-1}^2}{1-y_{i-1}^2} \right).
\] 
For the sake of briefness let 
\begin{equation}\label{eq:hi}
h_i=1+\alpha_{i+1}+\beta_{i+1}=1+2 \frac{y_0^2}{1-y_0^2} + 4 \left( \frac{y_1^2}{1-y_1^2} + 
\cdots + \frac{y_i^2}{1-y_i^2} \right)  
\end{equation}
for $i=0,1,\ldots$ Then \eqref{eq:dyi} becomes
\[ y_{i+1} = \left( 1- \frac{2}{h_i} \right) y_i \quad\text{ with }\quad h_{i+1}=h_i + 4 
\frac{y_{i+1}^2}{1-y_{i+1}^2}, \quad h_0 = 1+2 \frac{y_0^2}{1-y_0^2}. \]

\begin{lem} \label{lem:xi-yi-ai-bi}
If 
\begin{equation}\label{eq:cond-y0}
\frac{1}{\sqrt{3}} < y_0 < \frac{3}{\sqrt{19}}
\end{equation}
then, for all $i \ge 0$,
\begin{enumerate}
\item[(a$_i$)] $0 < y_i < 1$,
\item[(b$_i$)] $y_i < \frac{1}{2} y_{i-1}$ (no claim if $i= 0$),
\item[(c$_i$)] $2 < h_{i-1} < h_i$ (the claim means only $h_0 > 2$ if $i=0$),
\item[(d$_i$)] $h_i \le 1+\frac{10-4^{1-i}}{3} \frac{y_0^2}{1-y_0^2}$.
\end{enumerate} 
In particular, the sequence $(y_i)_{i \ge 0}$ is positive, strictly decreasing with $\lim_{i \to \infty} y_i =0$ and
\begin{equation}\label{eq:lem-yi}
0 < y_i < 2^{-1}y_{i-1} < 2^{-2} y_{i-2} < \cdots < 2^{-i} y_0 < y_0 < 1 \quad\mbox{ for }\quad i \in \N. 
\end{equation}
\end{lem}

\begin{proof}
The claims are proved by induction over $i$. Base case:

(a$_0$): $0 < y_0 < 1$ by \eqref{eq:cond-y0}.

(b$_0$): There is nothing to show. ((b$_0$) is not needed in the sequel.)

(c$_0$): $h_0=1+ 2 \frac{y_0^2}{1-y_0^2} = 1+2 \frac{1}{\frac{1}{y_0^2}-1} > 
1+2\frac{1}{\frac{1}{1/3}-1} = 2$, because $\frac{1}{\sqrt{3}} < y_0 < 1$. 

(d$_0$): $h_0=1+ 2 \frac{y_0^2}{1-y_0^2} = 1+ \frac{10-4^{1-1\cdot 0}}{3} \frac{y_0^2}{1-y_0^2}$. 

Step of induction:

(a$_{i+1}$) and (b$_{i+1}$): 
$2 \stackrel{({\rm c}_i)}{<} h_i 
\stackrel{({\rm d}_i)}{\le} 1+\frac{10-4^{1-i}}{3} \frac{y_0^2}{1-y_0^2}
< 1+\frac{10}{3} \frac{y_0^2}{1-y_0^2}
\stackrel{\eqref{eq:cond-y0}}{<} 4$. 
It follows that $0<1-\frac{2}{h_i} <\frac{1}{2}$, hence $y_{i+1} = \left(1-\frac{2}{h_i}\right) y_i 
\stackrel{({\rm a}_{i})}{\in} \left(0,\frac{1}{2} y_i\right)$. This gives (a$_{i+1}$) and (b$_{i+1}$).

(c$_{i+1}$): $2 \stackrel{({\rm c}_{i})}{<} h_i \stackrel{({\rm a}_{i+1})}{<}  h_i + 4 \frac{y_{i+1}^2}{1-y_{i+1}^2} = h_{i+1}$.

(d$_{i+1}$): We have $0 \stackrel{({\rm a}_{i+1})}{<} y_{i+1} \stackrel{({\rm b}_{i+1})}{<} 2^{-1} y_i \stackrel{({\rm b}_i)}{<} \cdots \stackrel{({\rm b}_1)}{<} 2^{-(i+1)} y _0 < y_0 < 1$. This yields
$h_{i+1} = h_i + 4 \frac{y_{i+1}^2}{1-y_{i+1}^2} 
\le h_i + 4 \frac{y_{i+1}^2}{1-y_{0}^2}
\stackrel{({\rm d}_{i})}{\le} \left(1+\frac{10-4^{1-i}}{3} \frac{y_0^2}{1-y_0^2}\right) + 4 \frac{\left(2^{-(i+1)} y_0\right)^2}{1-y_0^2}
=  1 +\frac{10-4^{1-(i+1)}}{3}\frac{y_0^2}{1-y_0^2}$.
\end{proof}

\begin{rem} \label{rem:sqrt3}
One may also choose other values $0<y_0< 1$ as the initial value.
Numerical computations suggest that in the case $0 < y_0 < \frac{1}{\sqrt{3}}$ the recursion formulas 
\eqref{eq:start}-\eqref{eq:bi} yield values $y_i$ that converge to 0 alternatingly. A corresponding tiling is shown in Figure
\ref{fig:streifen-altern}. This one may serve as well as a starting point
for constructing a vertex-to-vertex equipartition of the plane into triangles. However,
the mix of signs will make the computations more tedious.
\begin{figure}
\[ \includegraphics[width=.8\textwidth]{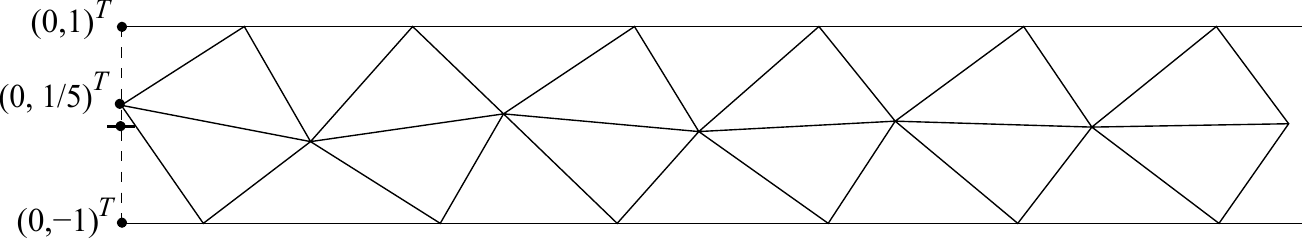} \]
\caption{Values $y_0<\frac{1}{\sqrt{3}}$ yield a tiling of the strip $S^+$
where the sign of the $y_i$ alternates. \label{fig:streifen-altern}}
\end{figure}
For $\frac{3}{\sqrt{19}} \le y_0 <1$, numerical evidence shows
that the behaviour of the resulting tilings is similar to Figure 
\ref{fig:streifen-bez}, but the proof of Lemma~\ref{lem:xi-yi-ai-bi} does not 
work in the same way as above.
The critical case $y_0=\frac{1}{\sqrt{3}}$ is considered separately in Lemma~\ref{lem:1/sqrt(3)} and Figure~\ref{fig:1/sqrt(3)} below.
\end{rem}

\begin{lem}\label{lem:perimeter}
Let $y_0 \in \left(\frac{1}{\sqrt{3}},\frac{3}{\sqrt{19}}\right)$. Then the perimeters of the triangles in the distorted tiling $\T=\T(y_0)$ of $S$ given by $y_0$ 
are bounded by some common constant.
\end{lem}

\begin{proof}
Since the triangles of the undistorted tiling from Figure~\ref{fig:streifen-ungestoert}
have constant perimeter, it is enough to show that the deviations of the coordinates of the vertices of the triangles of $\T$ from the respective coordinates from the undistorted tiling are uniformly bounded. That is, we have to show uniform boundedness of the values $\xi_i$, $y_i$, $\alpha_i$ and $\beta_i$.

Lemma~\ref{lem:xi-yi-ai-bi} (a$_i$) settles the claim for $y_i$.
For $\alpha_i$, $\beta_i$ and $\xi_i$, $i \ge 1$, we estimate
\begin{align*}
|\alpha_i| &
\stackrel{\eqref{eq:dstart},\eqref{eq:dai}}{=} \left|\frac{y_0}{1-y_0}+\sum_{j=1}^{i-1} \frac{2y_j}{1-y_j}\right|
\le 2 \sum_{j=0}^\infty \frac{|y_j|}{|1-y_j|}
\stackrel{\eqref{eq:lem-yi}}{\le} 2 \sum_{j=0}^\infty \frac{2^{-j}y_0}{1-y_0}
=\frac{4y_0}{1-y_0}=:C_\alpha,
\\
|\beta_i| & 
\stackrel{\eqref{eq:dstart},\eqref{eq:dbi}}{=} \left|-\frac{y_0}{1+y_0}-\sum_{j=1}^{i-1} \frac{2y_j}{1+y_j}\right|
\le 2 \sum_{j=0}^\infty \frac{|y_j|}{|1+y_j|}
\stackrel{\eqref{eq:lem-yi}}{\le} 2 \sum_{j=0}^\infty \frac{2^{-j}y_0}{1}
=4y_0=:C_\beta,
\\
|\xi_i| &
\stackrel{\eqref{eq:dstart},\eqref{eq:dxi}}{=} \left|-\sum_{j=1}^{i} \frac{(\alpha_j-\beta_j)y_{j-1}}{1+\alpha_j+\beta_j}\right|
\stackrel{\eqref{eq:hi}}{=} \left|-\sum_{j=1}^{i} \frac{(\alpha_j-\beta_j)y_{j-1}}{h_{j-1}}\right|
\le
\sum_{j=1}^{i} \frac{(|\alpha_j|+|\beta_j|)|y_{j-1}|}{|h_{j-1}|}\\
& 
\stackrel{\eqref{eq:lem-yi},({\rm c}_{j})}{\le}
\sum_{j=1}^{\infty} \frac{(C_\alpha+C_\beta)2^{-(j-1)}y_0}{2}
=(C_\alpha+C_\beta)y_0. \qedhere
\end{align*}
\end{proof}

Next we study congruence relations between triangles from tilings $\T=\T(y_0)$ of the strip $S$. Let $\cong$ denote congruence with respect to the group of Euclidean isometries (including reflections). We write $\simeq$ for congruence under the subgroup of all translations and all rotations by an angle of $180^\circ$. That is, two sets $A,B \in \mathbb{R}^2$ satisfy $A \simeq B$ if and only if there exist $s \in \{\pm 1\}$ and $t_1,t_2 \in \mathbb{R}$ such that $B=sA+(t_1,t_2)^T$.

We start with an observation on the tiling $\T^\ast=\T\big(\frac{1}{\sqrt{3}}\big)$ of $S$ based on the parameter $y_0=\frac{1}{\sqrt{3}}$. The respective triangles are denoted by $T_i^{\ast j}=T_i^j\big(\frac{1}{\sqrt{3}}\big)$, see Figure~\ref{fig:1/sqrt(3)}. Here $T^{*j}_{-i}$ denotes the image of
$T^{*j}_i$ under reflection in the vertical axis.

\begin{figure}
\[ \includegraphics[width=.8\textwidth]{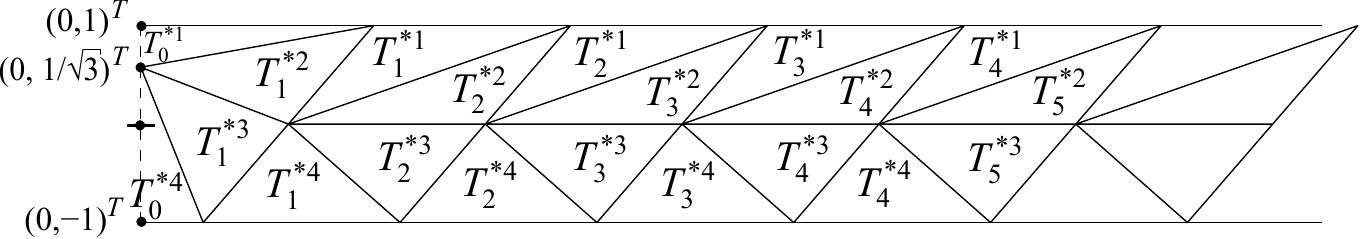} \]
\caption{The tiling $\T^\ast$ of $S$ with parameter $y_0=\frac{1}{\sqrt{3}}$.
  The six congruence classes of tiles in $\T^*$ are $[T_0^{*1}],  [T_0^{*4}],  [T_1^{*2}],
  [T_1^{*3}]$ (one element each), $[T_1^{*1}]=[T_2^{*2}],  [T_1^{*4}]=[T_2^{*3}]$.
  \label{fig:1/sqrt(3)}}
\end{figure}

\begin{lem}\label{lem:1/sqrt(3)}
The coordinates of the tiling $\T^\ast$ are $x_0=0$, $y_0=\frac{1}{\sqrt{3}}$ and $x_i=2i-\frac{1}{2}$, $y_i=0$, $a_i=2i+\frac{\sqrt{3}-1}{2}$, $b_i=2i-\frac{\sqrt{3}+1}{2}$ for $i \ge 1$. Every triangle of $\T^\ast$ is congruent to one of $T_0^{\ast 1}$, $T_1^{\ast 1}$,  $T_1^{\ast 2}$, $T_1^{\ast 3}$, $T_0^{\ast 4}$ and $T_1^{\ast 4}$.
Moreover,
\begin{equation}\label{eq:simeq_ast}
T_i^{\ast j} \not \simeq T_{-i}^{\ast j} \quad\mbox{ for }\quad i \in \N,\, j=1,2,3,4.
\end{equation}
\end{lem}

\begin{proof}
This is a direct consequence of \eqref{eq:start}-\eqref{eq:bi}.
\end{proof}

\begin{lem}\label{lem:F(i,j,i',j')}
For $y_0 \in (0,1)$ we denote the triangles from the tiling $\T=\T(y_0)$ of $S$ by $T_i^j=T_i^j(y_0)$, $(i,j) \in I=\big((\mathbb{Z} \setminus \{0\}) \times \{1,2,3,4\}\big) \cup \{(0,1),(0,4)\}$. If $(i,j),(i',j') \in I$ are such that $T_i^{\ast j} \not\simeq T_{i'}^{\ast j'}$, then the set
\[
F(i,j,i',j')=\left\{y_0 \in (0,1)\middle|\, T_i^j(y_0) \simeq T_{i'}^{j'}(y_0)\right\}
\]
is finite.
\end{lem}

\begin{proof}
We describe the triangles by their vertices, that is, $T_i^j=\triangle\left((v^1_1,v^1_2)^T,(v^2_1,v^2_2)^T,(v^3_1,v^3_2)^T\right)$ and $T_{i'}^{j'}=\triangle\left((v'^1_1,v'^1_2)^T,(v'^2_1,v'^2_2)^T,(v'^3_1,v'^3_2)^T\right)$.
Assume that $T_i^j \simeq T_{i'}^{j'}$. Then there are $s \in \{\pm 1\}$ and $t_1,t_2 \in \mathbb{R}$ such that $T_{i'}^{j'}=sT_i^j+(t_1,t_2)^T$. The corresponding map $\varphi\big((x,y)^T\big)=s(x,y)^T+(t_1,t_2)^T$ induces a permutation $\pi$ of $\{1,2,3\}$ via $\varphi\left( (v^k_1,v^k_2)^T\right)=\big(v'^{\pi(k)}_1,v'^{\pi(k)}_2\big)^T$. Thus
\begin{equation}\label{eq:map}
s\left(v^k_1,v^k_2\right)^T+(t_1,t_2)^T=\big(v'^{\pi(k)}_1,v'^{\pi(k)}_2\big)^T \quad\mbox{ for }\quad k=1,2,3.
\end{equation}
Now we distinguish 12 situations depending on the choice of $s \in \{\pm 1\}$ and the permutation $\pi$.

\emph{Case 1: $s=1$ and $\pi$ is the identity. } From \eqref{eq:map} with $k=1$ we obtain $t_1=v'^1_1-v_1^1$ and $t_2=v'^1_2-v^1_2$. Substituting these into \eqref{eq:map} gives
\[ 
\left(v^k_1,v^k_2\right)^T+\left(v'^1_1-v_1^1,v'^1_2-v^1_2\right)^T=\left(v'^{k}_1,v'^{k}_2\right)^T \quad\mbox{ for }\quad k=1,2,3.
\]
These are six linear equations in terms of coordinates of vertices of $T_i^j$ and $T_{i'}^{j'}$. By \eqref{eq:start}-\eqref{eq:bi} these coordinates are rational functions of $y_0$. Since $T_i^{\ast j} \not\simeq T_{i'}^{\ast j'}$, at least one of these six equations fails when $y_0$ is replaced by $\frac{1}{\sqrt{3}}$. Hence that very equation is a non-trivial rational equation in $y_0$, that may have at most finitely many solutions $y_0$. Thus Case 1 applies to at most finitely many elements of $F(i,j,i',j')$.

\emph{Case 2: $s=-1$ and $\pi$ is the identity. } Now \eqref{eq:map} with $k=1$ gives
$t_1=v'^1_1+v_1^1$ and $t_2=v'^1_2+v^1_2$. We obtain
\[
-\left(v^k_1,v^k_2\right)^T+\left(v'^1_1+v_1^1,v'^1_2+v^1_2\right)^T=\left(v'^{k}_1,v'^{k}_2\right)^T \quad\mbox{ for }\quad k=1,2,3
\]
and follow the same arguments as above.
In the same way we see that each of the 12 cases yields only finitely many elements of  $F(i,j,i',j')$.
\end{proof}

\begin{lem}\label{lem:Ffinite}
The set
\[
F=\left\{y_0 \in \left(\frac{1}{\sqrt{3}},\frac{3}{\sqrt{19}}\right)\middle| \mbox{ There are distinct triangles } T,T' \in \T(y_0) \mbox{ such that } T \simeq T'.\right\}
\]
is at most countable. 
\end{lem}

\begin{proof}
Using notation from Lemma~\ref{lem:F(i,j,i',j')} we have
\[
F= \bigcup_{(i,j),(i',j') \in I, (i,j) \ne (i',j')} F(i,j,i',j') \cap \left(\frac{1}{\sqrt{3}},\frac{3}{\sqrt{19}}\right).
\]
We shall see that all the sets $F(i,j,i',j') \cap \left(\frac{1}{\sqrt{3}},\frac{3}{\sqrt{19}}\right)$ are finite. 

Let $y_0 \in \left(\frac{1}{\sqrt{3}},\frac{3}{\sqrt{19}}\right)$. The \emph{vertical width} $\operatorname{vw}(T)$ of a triangle $T$ is the maximal distance between second coordinates of vertices of $T$. By Lemma~\ref{lem:xi-yi-ai-bi}, $(y_i)_{i \ge 0}$ is positive and strictly decreasing. We obtain
\[
\operatorname{vw}\left(T_i^j\right)
=\left\{ 
\begin{array}{ll}
1-y_{|i|}, & j \in \{1,2\},\\
1+y_{|i|-1}, & j=3,\\
1+y_{|i|}, & j=4.
\end{array}
\right.
\]
See Figure~\ref{fig:streifen-bez} for an illustration. 
Note that $T_i^j \simeq T_{i'}^{j'}$ implies $\operatorname{vw}\left(T_i^j\right)=\operatorname{vw}\left(T_{i'}^{j'}\right)$.

Now assume that $y_0 \in F(i,j,i',j')$, that is, $T_i^j \simeq T_{i'}^{j'}$.

\emph{Case 1: $j \in \{1,2\}$ and $j' \in \{3,4\}$ (resp.\ $j' \in \{1,2\}$ and $j \in \{3,4\}$). }
We see that $\operatorname{vw}\left(T_i^j\right) \ne \operatorname{vw}\left(T_{i'}^{j'}\right)$, which contradicts $T_i^j \simeq T_{i'}^{j'}$.
Hence $F(i,j,i',j') \cap \left(\frac{1}{\sqrt{3}},\frac{3}{\sqrt{19}}\right)$ is empty.

\emph{Case 2: $\{j,j'\}=\{1,2\}$ or $\{j,j'\}=\{3,4\}$. } Then one of $T_i^j$ and $T_{i'}^{j'}$ has a horizontal edge and the other one has not, again a contradiction to $T_i^j \simeq T_{i'}^{j'}$. So $F(i,j,i',j') \cap \left(\frac{1}{\sqrt{3}},\frac{3}{\sqrt{19}}\right)$ is empty, too.

\emph{Case 3: $j=j'$ and $|i| \ne |i'|$. } We can argue as in Case 1.

\emph{Case 4: $j=j'$ and $|i|=|i'|$. } Since  $(i,j) \ne (i',j')$, we have $i'=-i \ne 0$. By \eqref{eq:simeq_ast} and Lemma~\ref{lem:F(i,j,i',j')}, $F(i,j,i',j')$ is finite. This completes the proof.
\end{proof}

Picking $y_0 \in \left(\frac{1}{\sqrt{3}},\frac{3}{\sqrt{19}}\right) \setminus F$ and 
using Lemma~\ref{lem:perimeter} yields the following result.

\begin{cor}\label{cor:T}
There is a vertex-to-vertex tiling $\overline{\T}$ of the strip $S$ by triangles of unit area and uniformly bounded perimeter such that $T \not \simeq T'$ for all $T,T' \in \overline{\T}$ with $T \ne T'$.
\end{cor}

The final tiling of $\mathbb{R}^2$ will be obtained by stacking sheared copies $\left( \begin{smallmatrix}1 & \delta \\ 0 & 1 
\end{smallmatrix} \right) \overline{\T}$ of $\overline{\T}$; compare Figure \ref{fig:final}.  In order to make sure that almost every shear mapping of $\overline{\T}$ produces 
\begin{itemize}
\item[(i)] mutually incongruent triangles that are 
\item[(ii)] different from countably many prescribed shapes, 
\end{itemize}
we need the following result.
\begin{figure}
\[ \includegraphics[width=.7\textwidth]{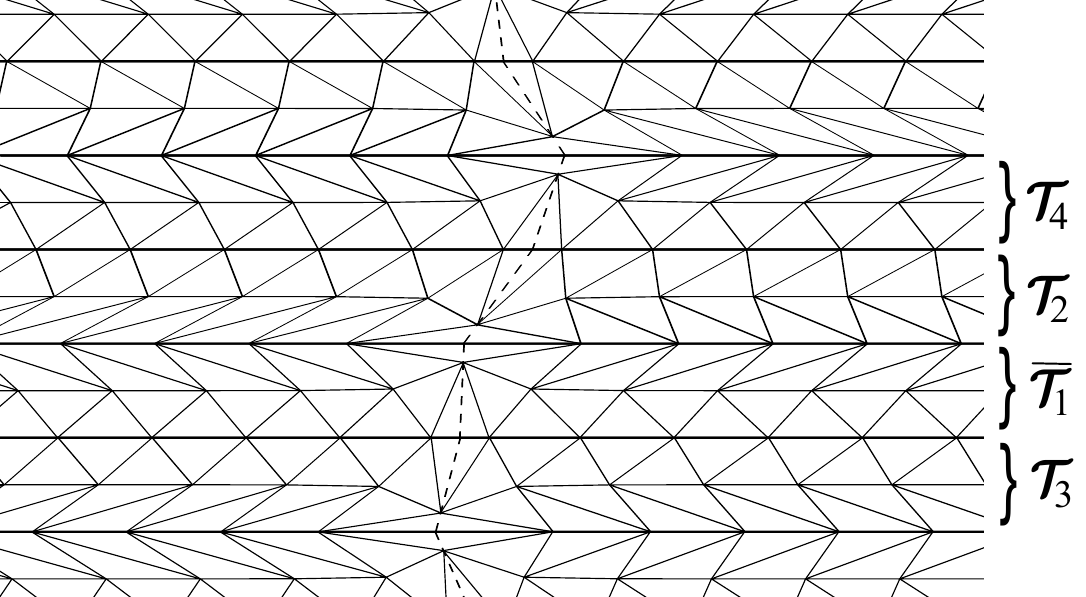} \]
\caption{Stacked images of $\overline{\T}_n$, $n \ge 1$, tile the plane. The $\overline{\T}_n$
are sheared copies of $\overline{\T}$. $\T_n$ denotes the image of $\overline{\T}_n$ under an
appropriate translation (and possibly reflection).
\label{fig:final}} 
\end{figure}
\begin{lem} \label{lem:kong-endlich}
Let $T$ and $T'$ be triangles.
\begin{enumerate}
\item[(a)] If $T \not\simeq T'$ then the set
$\left\{ \delta \in \R \mid \left( \begin{smallmatrix}1 & \delta \\ 0 & 1 
\end{smallmatrix} \right) T \cong \left( \begin{smallmatrix}1 & \delta \\ 0 & 1 
\end{smallmatrix} \right) T' \right\}$ is finite.
\item[(b)] The set $\left\{ \delta \in \R \mid
\left( \begin{smallmatrix}1 & \delta \\ 0 & 1 \end{smallmatrix} \right) T
\cong T' \right\}$ is finite.
\end{enumerate}
\end{lem}

\begin{proof}
(a) Since $T \not\simeq T'$, there exists an 
edge $e_0$ of $T$ that is a translate of neither of the three 
edges $e_1,e_2,e_3$ of $T'$. Then
\[
\left\{ \delta \in \R \mid \left( \begin{smallmatrix}1 & \delta \\ 0 & 1 
\end{smallmatrix} \right) T \cong \left( \begin{smallmatrix}1 & \delta \\ 0 & 1 
\end{smallmatrix} \right) T' \right\} 
\subseteq \big\{ \delta \in \R \big| \left\|\left( \begin{smallmatrix}1 & \delta \\ 0 & 1 
\end{smallmatrix} \right) e_0 \right\| 
\in \left\{ \left\|\left( \begin{smallmatrix}1 & \delta \\ 0 & 1 
\end{smallmatrix} \right)e_i\right\| \mid i=1,2,3 \right\}
\big\}=H_1\cup H_2 \cup H_3
\]
with $H_i=\left\{ \delta \in \R \mid  \| \big( \begin{smallmatrix}1 & \delta \\ 0 & 1 
\end{smallmatrix} \big) e_0 \|^2 = \| \big( \begin{smallmatrix}1 & \delta \\ 0 & 1 
\end{smallmatrix} \big) e_i\|^2 \right\}$. We shall show that $|H_i|<\infty$.

Let $(x_i,y_i)^T$ be the vector joining the endpoints of edge $e_i$,
where $0 \le i \le 3$. Without loss of generality, $y_i \ge 0$ and $x_i > 0$ 
if $y_i=0$. Then, for $1 \le i \le 3$,
\begin{align*} 
H_i & = \left\{ \delta \in \R \mid (x_0+\delta y_0)^2 + y_0^2 = 
(x_i + \delta y_i)^2 + y_i^2 \right\} \\
&  = \left\{ \delta \in \R \mid \delta^2(y_0^2- y_i^2) + 
\delta2(x_0y_0 - x_iy_i)+x_0^2 + y_0^2 -x_i^2 -y_i^2=0 \right\}.
\end{align*}
Either $y_0 \ne y_i$, then $y_0^2-y_i^2 \ne 0$ and the last equation has at most two 
solutions in $\delta$. Or $y_0=y_i$ and $x_0 \ne x_i$. In the latter case we may have $y_0=y_i\ne 0$, whence $x_0 y_0 - x_i y_i \ne 0$ and the equation in the 
set above has a unique solution. Otherwise $y_0=y_i=0$ and $x_0 \ne x_i$. Since $x_0,x_i > 0$, the equation gives the contradiction $x_0^2-x_i^2=0$, and $H_i$ is empty. Altogether we obtain $|H_i| \le 2$.

(b) Now let $e_0$ be an edge of $T$ such that the corresponding vector $(x_0,y_0)^T$
satisfies $y_0 \ne 0$. Denote the edges of $T'$ by $e_1,e_2,e_3$ 
as above. Then
\begin{align*}
\left\{ \delta \in \R \mid \left( \begin{smallmatrix}1 & \delta \\ 0 & 1 
\end{smallmatrix} \right) T \cong T' \right\}
&\subseteq  \left\{ \delta \in \R \mid \| \left( \begin{smallmatrix}1 & \delta \\ 0 & 1 
\end{smallmatrix} \right) e_0 \|^2 \in \left\{\|e_1\|^2, \|e_2\|^2, \|e_3\|^2 \right\} \right\} \\
&= \left\{ \delta \in \R \mid (x_0+\delta y_0)^2+y_0^2   
\in \left\{\|e_1\|^2, \|e_2\|^2, \|e_3\|^2 \right\} \right\} \\
&= \left\{ \delta \in \R \mid  \delta^2y_0^2 + \delta 2 x_0 y_0 +x_0^2+y_0^2
\in \left\{\|e_1\|^2, \|e_2\|^2, \|e_3\|^2 \right\} \right\}.
\end{align*}
Since $y_0 \ne 0$, the last term is quadratic in $\delta$ again. Thus the cardinality of the last set is at most six.
\end{proof}

Now we can provide sheared images of the tiling $\overline{\T}$ that shall be stacked in order to form a tiling of the plane.

\begin{cor}\label{cor:Tn}
There exist sheared images $\overline{\T}_n= \left( \begin{smallmatrix}1 & \delta_n \\ 0 & 1 
\end{smallmatrix} \right) \overline{\T}$ of the tiling $\overline{\T}$, $n \in \N$, such that \begin{enumerate}
\item[(a)] $|\delta_n|<1$ for all $n \ge 1$,
\item[(b)] for all $n \ge 1$, $\overline{\T}_n$ does not contain two distinct congruent triangles,
\item[(c)] for all $1 \le n' < n$, there are no congruent triangles $T \in \overline{\T}_n$ and $T' \in \overline{\T}_{n'}$.
\end{enumerate} 
\end{cor}

\begin{proof}
We construct the tilings $\overline{\T}_n$ by induction over $n$.

Base case (construction of $\overline{\T}_1$): By Corollary~\ref{cor:T} and Lemma~\ref{lem:kong-endlich} (a), the set
\[
A= \bigcup_{T,T' \in \overline{\T},T \ne T'} \left\{\delta \in \mathbb{R} \mid
\left(\begin{smallmatrix}1 & \delta \\ 0 & 1 
\end{smallmatrix} \right)T\cong\left(\begin{smallmatrix}1 & \delta \\ 0 & 1 
\end{smallmatrix} \right)T' \right\}
\]
is at most countable, because $\overline{\T}$ is countable. We pick $\delta_1 \in (-1,1) \setminus A$ and obtain $\overline{\T}_1=\left(\begin{smallmatrix}1 & \delta_1 \\ 0 & 1 
\end{smallmatrix} \right)\overline{\T}$. The choice of $\delta_1$ implies claims (a) and (b) for $n=1$.

Step of induction (construction of $\overline{\T}_n$, $n \ge 2$): By Lemma~\ref{lem:kong-endlich} (b), the set
\[
B=\bigcup_{T \in \overline{\T},T' \in \overline{\T}_1 \cup \ldots \cup \overline{\T}_{n-1}} \left\{\delta \in \mathbb{R} \mid
\left(\begin{smallmatrix}1 & \delta \\ 0 & 1 
\end{smallmatrix} \right)T \cong T' \right\}
\]
is at most countable, since $\overline{\T}$ and $\overline{\T}_1 \cup \ldots \cup \overline{\T}_{n-1}$ are countable. We fix some $\delta_n \in (-1,1) \setminus (A \cup B)$ this way defining $\overline{\T}_n=\left(\begin{smallmatrix}1 & \delta_n \\ 0 & 1 
\end{smallmatrix} \right)\overline{\T}$. We obtain the respective items of condition (a) by $\delta_n \in (-1,1)$, of (b) by $\delta_n \notin A$, and of (c) by $\delta_n \notin B$.
\end{proof}

All tilings $\overline{\T}_n=\left(\begin{smallmatrix}1 & \delta_n \\ 0 & 1 
\end{smallmatrix} \right)\overline{\T}$ of $S$ have the same mutual distances $a_{i+1}-a_i$ between adjacent vertices at the upper boundary $y=1$ as $\overline{\T}$. The same applies to the lower boundary $y=-1$. $S$ is tiled by $\overline{\T}_1$. Therefore the parallel strip $S+(0,2)^T$ can be tiled by a suitable image $\T_2$ of $\overline{\T}_2$ under a reflection 
with respect to the horizontal axis and some translation such that $\overline{\T}_1 \cup \T_2$ is vertex-to-vertex. Similarly, we tile $S+(0,-2)^T$ by a reflected and translated image $\T_3$ of $\overline{\T}_3$, we tile $S+(0,4)^T$ and $S+(0,-4)^T$ by suitable translates $\T_4$ and $\T_5$ of $\overline{\T}_4$ and $\overline{\T}_5$ etc. This way we obtain the desired vertex-to-vertex tiling $\overline{\T}_1 \cup \T_2 \cup \T_3 \cup \ldots$ of $\mathbb{R}^2$, see Figure~\ref{fig:final}. The perimeters of all triangles of that tiling are uniformly bounded because of the respective property of $\overline{\T}$ (see Corollary~\ref{cor:T}) and by Corollary~\ref{cor:Tn}~(a).    

The proof of Theorem~\ref{thm:vtv-triangles} is complete.
\end{proof}


\section{Non-vtv equipartitions of the plane into hexagons}

\begin{figure}[b]
\[ \includegraphics[width=.7\textwidth]{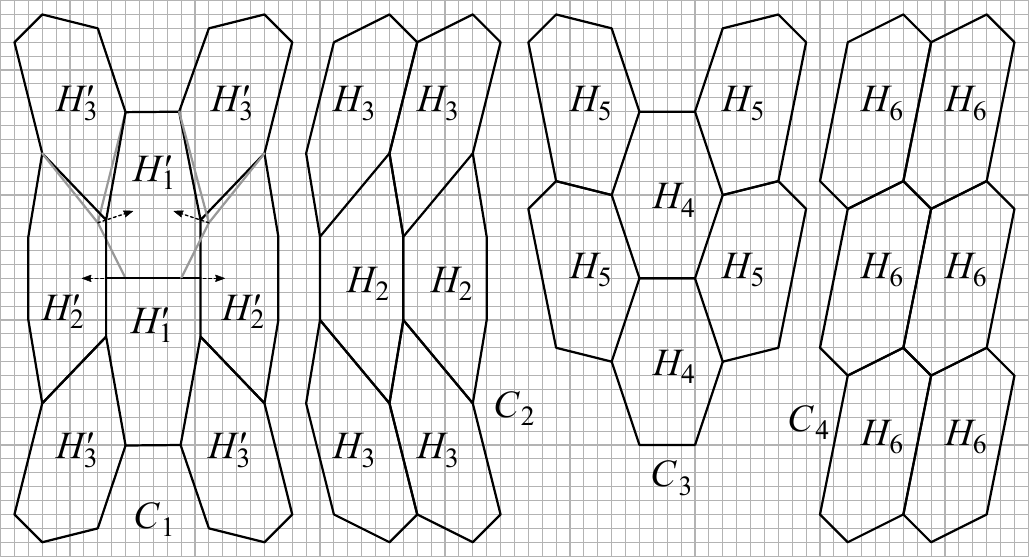} \]
\caption{Copies of these four clusters are used to tile the plane.
In each copy the hexagons will be distorted in a different way. 
\label{fig:6ecke-cluster}} 
\end{figure}
\begin{thm}
There is a non-vertex-to-vertex tiling of the plane by pairwise incongruent 
convex hexagons of unit area and uniformly bounded perimeter.
\end{thm}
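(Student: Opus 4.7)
My plan is to follow the ``tiling a tile'' strategy from Section \ref{sec:tiling-a-tile}, but applied to the four clusters of convex hexagons displayed in Figure \ref{fig:6ecke-cluster}. First I would verify that congruent copies of these four clusters can be arranged to cover $\R^2$ and that the resulting global arrangement is forced to be non-vertex-to-vertex, with the non-vtv incidences occurring along the seams between neighbouring clusters; within a single cluster the hexagons meet in a vtv fashion, but at the boundary between two clusters a vertex of one hexagon lands in the relative interior of an edge of a hexagon of the adjacent cluster. This accounts for the ``non-vtv'' part of the conclusion.

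Next I would analyse each of the four clusters individually, marking an interior vertex as ``free'' with two degrees of freedom and then picking an ordering of the remaining interior vertices so that, as in the pentagon argument leading to Theorem \ref{thm:pent}, each successive vertex sees exactly one area constraint, leaving one continuous degree of freedom (a line segment). The final interior vertex of the cluster is fully determined, and the area of the last hexagon is automatic because the total area of the cluster is fixed. The crucial check is that each cluster really admits such an ordering; this is a finite, essentially combinatorial verification on the four pictures. Throughout I would keep all perturbations inside a small open ball around the undistorted configuration so that convexity of each hexagon is preserved, so that the cluster-to-cluster matching conditions (matching boundary edges along the seams) remain satisfiable, and so that the perimeter of every hexagon stays bounded by a uniform constant.

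Once this local flexibility is in place, I would enumerate the cluster copies appearing in the global arrangement as a sequence $C_1, C_2, \ldots$ and apply the now-standard countability argument. At stage $n$, having already fixed the shapes inside $C_1, \ldots, C_{n-1}$, there are only countably many hexagon shapes to avoid. The parameter space of admissible distortions of $C_n$ is open in some $\R^k$, and the set of parameters producing a hexagon congruent to one of the countably many forbidden shapes, or producing two congruent hexagons within $C_n$, is a countable union of proper real-analytic subvarieties of this parameter space, hence has empty interior. Choosing any parameter outside this meagre set yields a legal extension. Iterating over all $n \in \N$ produces the desired tiling by pairwise incongruent convex hexagons of unit area, with uniformly bounded perimeters, and non-vtv by construction.

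The main obstacle I expect is the first step: checking that each of the four clusters in Figure \ref{fig:6ecke-cluster} genuinely admits a ``vertex-by-vertex'' ordering along which the area constraints reduce the freedom in the required way, \emph{and} that throughout an open neighbourhood of the undistorted configuration all six-sided cells remain strictly convex hexagons (rather than degenerating to pentagons or becoming non-convex), and that the seam conditions between neighbouring clusters continue to match as each cluster is perturbed. This is a case analysis on the four clusters. A secondary, essentially routine, issue is ruling out that the congruence conditions between a hexagon in $C_n$ and a hexagon in some earlier cluster reduce to an identity in the parameters; this is analogous to Lemma \ref{lem:F(i,j,i',j')} and follows from the fact that a single undistorted reference configuration does \emph{not} produce the forbidden coincidence, so the relevant real-analytic equation is non-trivial.
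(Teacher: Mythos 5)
Your proposal follows essentially the same route as the paper: four hexagon clusters that tile the plane, a ``tiling a tile'' flexibility analysis giving each cluster uncountably many unit-area distortions, and the standard countability argument to avoid all previously used shapes at each stage. The only (minor) divergence is in where the non-vtv incidences sit --- in the paper's construction they occur inside the cluster $C_1$, which is used exactly once, rather than along the seams between neighbouring clusters --- but this is a detail of the figures you could not see and does not affect the structure of the argument.
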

\begin{proof}
The construction uses a variant of the tiling-a-tile method from
Subsection \ref{sec:tiling-a-tile}. Here we need four different clusters
$C_1, C_2, C_3, C_4$ that 
\begin{itemize}
\item[(i)] can tile the plane, and 
\item[(ii)] can be dissected into convex hexagons of unit area in uncountably many ways.
\end{itemize}
Figure \ref{fig:6ecke-cluster} shows how each of the four clusters is
dissected into copies of altogether eight different hexagons $H'_1,H'_2,H'_3,
H_2, \ldots, H_6$, each hexagon having unit area (unit area means here:
72 small boxes). 
\begin{figure}
\[ \includegraphics[width=.7\textwidth]{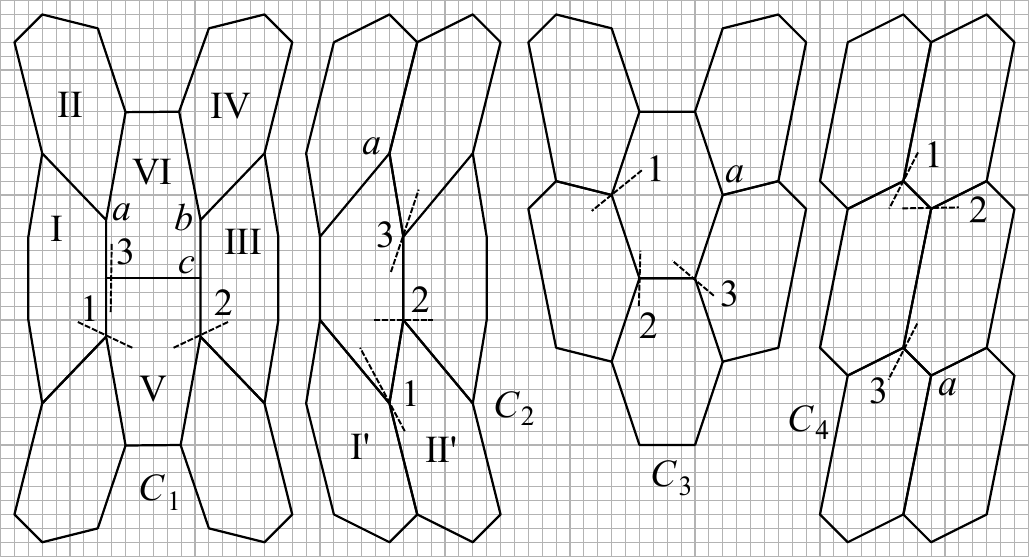} \]
\caption{Distortions within the four clusters from Figure~\ref{fig:6ecke-cluster}. Each dashed segment represents one degree of freedom. Numbers give the order of choosing free vertices.
\label{fig:6ecke-distortions}} 
\end{figure}
Figure \ref{fig:6ecke-distortions} illustrates the degrees 
of freedom for distortions within each cluster. In order to see that
the distortions act as desired one needs to consider the dependencies
in each cluster. For instance, in $C_1$, vertex 1 can be shifted
continuously along the dashed line, hence produces uncountably
many distinct versions of the lower left hexagon. Once vertex 1 is fixed,
the position of the vertex marked by $a$ is determined uniquely by
the requirements that hexagons I and II both need to have unit area.
Independently from
this choice, vertex 2 can be shifted along the dashed line, fixing
vertex $b$ by the requirement that hexagons III and IV both have unit
area, too. Independent of these choices vertex 3 can be shifted
along the approximately vertical dashed line. Vertex $c$ is then determined
uniquely by the requirement that hexagons V and VI have unit area.
Note also that indeed all hexagons will be continuously distorted
under these transformations.

In a similar manner one may shift vertex 1 in cluster $C_2$ by some
small amount along the dashed line, preserving the area of hexagon I'.
Once vertex 1 is fixed this determines the vertical position of the dashed
line at vertex 2 by the requirement that hexagon II' has unit area.
Still vertex 2 can be shifted along that
line in a continuous manner. The choice of 2 determines the exact
position of the dashed line at vertex 3. Still vertex 3 has one degree
of freedom along this line. The position of 3 determines the position of
vertex $a$ uniquely. Again note that all six hexagons will be distorted
continuously under these transformations.
In a similar manner the reader can convince oneself that the
same is true for clusters $C_3$ and $C_4$. 
\begin{figure}[b]
\[ \includegraphics[width=.9\textwidth]{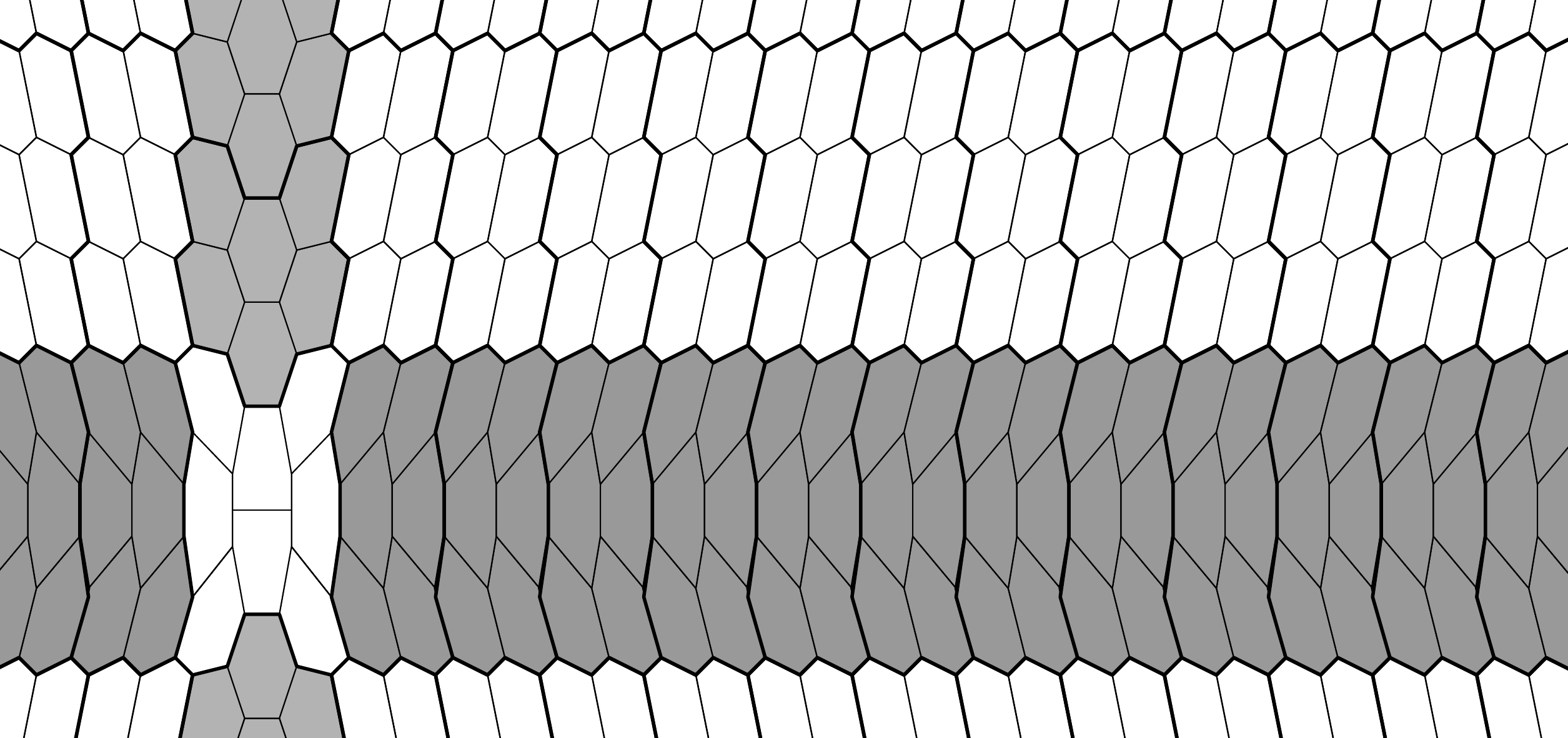} \]
\caption{The image illustrates how copies of the four clusters 
are used to tile the plane. \label{fig:6ecke-tiling}} 
\end{figure}

The desired tiling is sketched in Figure \ref{fig:6ecke-tiling}.
It can be obtained in a similar manner inductively as in the last section:
Start with the central cluster $C_1$, distort the hexagons such that
no two of them are congruent to each other. Add the next cluster,
distort the hexagons such that no pairwise congruent hexagons occur.
In each step there are only finitely many shapes to avoid, whereas
there are uncountably many hexagon shapes available.

Note that the cluster $C_1$ is used only once in the tiling.
This cluster contains the only points where the tiling is not vertex-to-vertex.
\end{proof}


\section*{Acknowledgments}
Both authors express their gratitude to R.\ Nandakumar for providing 
several interesting problems. The first author thanks Friedrich Schiller
University of Jena for financial support.


\end{document}